\begin{document}

 \newtheorem{thm}{Theorem}[section]
 \newtheorem{cor}[thm]{Corollary}
 \newtheorem{lem}[thm]{Lemma}{\rm}
 \newtheorem{prop}[thm]{Proposition}

 \newtheorem{defn}[thm]{Definition}{\rm}
 \newtheorem{assumption}[thm]{Assumption}
 \newtheorem{rem}[thm]{Remark}
 \newtheorem{ex}{Example}
\numberwithin{equation}{section}
\def\la{\langle}
\def\ra{\rangle}
\def\glexe{\leq_{gl}\,}
\def\glex{<_{gl}\,}
\def\e{{\rm e}}

\def\x{\mathbf{x}}
\def\P{\mathbf{P}}
\def\h{\mathbf{h}}
\def\by{\mathbf{y}}
\def\bz{\mathbf{z}}
\def\F{\mathcal{F}}
\def\R{\mathbb{R}}
\def\T{\mathbf{T}}
\def\N{\mathbb{N}}
\def\D{\mathbf{D}}
\def\V{\mathbf{V}}
\def\U{\mathbf{U}}
\def\K{\mathbf{K}}
\def\Q{\mathbf{Q}}
\def\H{\mathbf{H}}
\def\M{\mathbf{M}}
\def\oM{\overline{\mathbf{M}}}
\def\O{\mathbf{O}}
\def\C{\mathbb{C}}
\def\P{\mathbf{P}}
\def\Z{\mathbb{Z}}
\def\H{\mathcal{H}}
\def\A{\mathbf{A}}
\def\V{\mathbf{V}}
\def\AA{\overline{\mathbf{A}}}
\def\B{\mathbf{B}}
\def\c{\mathbf{C}}
\def\L{\mathcal{L}}
\def\bS{\mathbf{S}}
\def\H{\mathbf{H}}
\def\I{\mathbf{I}}
\def\Y{\mathbf{Y}}
\def\X{\mathbf{X}}
\def\G{\mathbf{G}}
\def\f{\mathbf{f}}
\def\z{\mathbf{z}}
\def\v{\mathbf{v}}
\def\y{\mathbf{y}}
\def\d{\hat{d}}
\def\bx{\mathbf{x}}
\def\bI{\mathbf{I}}
\def\y{\mathbf{y}}
\def\g{\mathbf{g}}
\def\w{\mathbf{w}}
\def\b{\mathbf{b}}
\def\a{\mathbf{a}}
\def\u{\mathbf{u}}
\def\q{\mathbf{q}}
\def\e{\mathbf{e}}
\def\s{\mathcal{S}}
\def\cc{\mathcal{C}}
\def\co{{\rm co}\,}
\def\tg{\tilde{g}}
\def\tx{\tilde{\x}}
\def\tg{\tilde{g}}
\def\tA{\tilde{\A}}

\def\supmu{{\rm supp}\,\mu}
\def\supp{{\rm supp}\,}
\def\cd{\mathcal{C}_d}
\def\cok{\mathcal{C}_{\K}}
\def\cop{COP}
\def\vol{{\rm vol}\,}
\def\om{\mathbf{\Omega}}
\def\blue{\color{blue}}
\def\red{\color{red}}
\def\blambda{\boldsymbol{\lambda}}
\def\bgamma{\boldsymbol{\gamma}}
\def\balpha{\boldsymbol{\alpha}}
\def\dis{\displaystyle}
\def\1{\boldsymbol{1}}
\def\va{\vert\balpha\vert}
\title{A Laplace duality for  integration}
\author{Jean B. Lasserre}
\address{LAAS-CNRS and Toulouse School of Economics (TSE)\\
LAAS, 7 avenue du Colonel Roche\\
31077 Toulouse C\'edex 4, France\\
Tel: +33561336415}
\email{lasserre@laas.fr}

\date{}

\maketitle
\begin{abstract}
We consider the integral $y\mapsto v(y)=\int_{K_y}f(\x)d\x$
on a domain $K_y=\{\x\in\R^d: g(\x)\leq y\}$, where
$g$ is continuous, nonnegative and $K_y$ is compact for all $y\in [0,+\infty)$.
Under some assumptions, we show that 
for every $y\in (0,\infty)$ there exists a distinguished scalar $\lambda_y\in (0,+\infty)$ such that
$v(y)=\int_{\R^d}f(\x)\exp(-\lambda_y\,g(\x))\,d\x$, which is 
the counterpart analogue for integration of Lagrangian
duality for optimization. A crucial ingredient is the Laplace transform, 
the analogue for integration of Legendre-Fenchel transform in optimization.
In particular, if 
both $f$ and $g$ are positively homogeneous then $\lambda_y$
is a simple explicitly rational function of $y$. In addition if 
$g$ is quadratic form then computing 
$v(y)$ reduces to computing the integral of $f$ 
with respect to a specific Gaussian measure for which exact and approximate numerical 
methods (e.g. cubatures) are available.\\
{\bf Keywords:}Integration, Laplace transform, optimization, positively homogeneous functions,
\end{abstract}

\section{Introduction}

Let $g:\R^d\to\R$ be continuous, nonnegative and such that
\begin{equation}
\label{set-Ky}
K_y\,:=\,\{\,\x\in\R^d: \: g(\x)\,\leq\,y\,\}\,,\quad y\in [0,+\infty)\,.
\end{equation}
is compact for all $y\in [0,+\infty)$, and let $f:\R^d\to\R$ be continuous. Consider the function
\begin{equation}
\label{def-v}
y\mapsto v(y)\,:=\,\int_{K_y}f(\x)\,d\x\,,\quad y\in [0,+\infty)\,,
\end{equation}
which is well-defined for every $y\in [0,+\infty)$. This problem appears in several 
area of science and engineering.
For instance, if $f$ the density of a Gaussian measure $\mu$ then $v(y)=\mu(K_y)$ 
provides the probability that a Gaussian  random vector (with distribution $\mu$) lies in $K_y$, 
a basic problem encountered in probability, analysis of dynamical systems, and 
space engineering. In particular, 
in aerospace engineering one encounters integrals \eqref{def-v}
(where $f=\exp(h)$ with both $h$ and $g$ quadratic polynomials)
for computing probability of collision for satellites;
the interested reader is referred to \cite{collision}, and the recent survey
\cite {survey}. Next, as we will see in the sequel,
the asymptotics of $v$ as $y\to 0$ or
$y\to\infty$, is related to the asymptotics of the exponential integral $\int f\,\exp(-\lambda\,g(\x))\,d\x$
as $\lambda\to \infty$ or $\lambda\to 0$. Under assumptions on $f$ and $g$, deep results 
on the latter exponential integral invoke Newton's diagram and classification of \emph{minimal points},
as discussed in \cite{arnold,varchenko,vasiliev-1,vasiliev-2} and references therein.

\subsection*{Contribution}
(i) Assuming that $f$ is nonnegative and the Laplace transform of $v$ exists 
(see e.g. \cite[\S 6.26.1]{Laplace-book} and satisfies the \emph{Final Value Theorem}
(see e.g. \cite[Theorem 3.8.2]{Laplace-book}), our first contribution  is to show that 
for every $y\in (0,+\infty)$ there exists $\lambda_y\in (0,+\infty)$ such that:
\begin{equation}
\label{main-1}
v(y)\,=\,\int_{\R^d}f(\x)\,\exp(-\lambda_y\,g(\x))\,d\x\,.
\end{equation}
That is,  integrating $f$ on $K_y$ reduces to integrating $f$ 
\emph{on the whole space} $\R^d$ but now against the measure with density $\x\mapsto \exp(-\lambda_y\, g(\x))$ with respect to Lebesgue measure. 
In some cases such integrals on the whole $\R^d$ can be approximated, e.g. by cubatures \cite{Cools-1,Cools-2}.
For instance if $g$ is convex quadratic, then specific cubatures \cite{cubature} are available for the Gaussian density 
$\x\mapsto \exp(-\lambda_y\, g(\x))$ (but also for direct integration on the ellipsoid $K_y$).
On the other hand, depending on $g$, integration on nasty domains $K_y$ can be 
quite difficult, e.g. if $g$ is not convex as in
Example \ref{example-1} in \S \ref{sec:main}, while for \eqref{main-1} one may use
cubatures (e.g. Gauss-Legendre) on a large box $[-r,r]^d$ 
to integrate the function $f\,\exp(-\lambda\,g)$ which is continuous;
see the discussion in Remark \ref{rem-1}.

(ii) Next, if $g$ and $f$ are positively homogeneous of degree $d_g$ and $d_f$ respectively, then $y$ and $\lambda_y$ are related by
\begin{equation}
\label{link}
y\cdot\lambda_y\,=\,\left(\Gamma(1+(d+d_f)/d_g)\right)^{d_g/(d+d_f)}\,,
\end{equation}
for every $y\in (0,+\infty)$. In particular, let $f$ be a polynomial 
of degree $d_f$, and write $f=\sum_{k=0}^{d_f}f_k$ where for each $k$,
$f_k$ homogeneous of degree $k$. Then 
for every $y\in (0,+\infty)$, 
\[v(y)\,=\,\sum_{k=0}^{d_f}\int_{\R^d}f_k(\x)\,\exp(-\lambda_{y,k}\,g(\x))\,d\x\,,\]
where for every $k=0,1,\ldots,d_f$,
\[\lambda_{y,k}\,=\,\frac{\left(\Gamma(1+(d+k)/d_g)\right)^{d_g/(d+k)}}{y}\,,\quad y\in (0,+\infty)\,.\]

(iii) Finally, we interpret \eqref{main-1} as a \emph{duality} result, namely a 
duality analogue for integration (hence in the usual $(+,\times)$-algebra) of 
\emph{Lagrangian duality} for optimization 
(hence in the $(\max,+)$-algebra).  
 Indeed, associated with the optimization problem
\begin{equation}
\label{hat-v}
\P:\quad \hat{v}(z)\,=\,\inf\,\{\,f(\x):\:h(\x)\,\geq\,z\,\}\,,\quad z\in\R\,,
\end{equation}
(with feasible set $\tilde{K}_z:=\{\x: h(\x)\geq z\}$) is the Lagrangian 
\begin{equation}
\label{lagrangian-optim}
\x\mapsto L(\x,\lambda)\,:=\,f(\x)-\lambda\,h(\x)\,.
\end{equation}
(With $h:=-g$ and $z=-y$, one retrieves $K_y$ in \eqref{set-Ky}.)

If $f$ and $-h$ are  convex 
then so is $\hat{v}$ and its Legendre-Fenchel transform $\hat{v}^*$ reads:
\begin{eqnarray}
\label{Fenchel}
\hat{v}^*(\lambda)&=&
\sup_y \lambda\,y-\hat{v}(y)\\
\nonumber
&=&\left\{\begin{array}{l}
-\displaystyle\inf_\x\,f(\x)-\lambda\,h(\x)\:\mbox{if $\lambda\geq0$,}\\
+\infty\:\mbox{otherwise.}\end{array}\right.
\end{eqnarray}
Then applying Legendre-Fenchel to $\hat{v}^*$ yields
\begin{eqnarray}
\label{Fenchel-again}
\hat{v}(z)&=&\sup_{\lambda}\lambda\,z-\hat{v}^*(\lambda)\\
\nonumber
&=&\displaystyle\
\sup_{\lambda\geq0}\,\lambda\,y+G(\lambda)\\
\label{min-lagrangian}
\mbox{with $G(\lambda)$}&:=&\inf_\x L(\x,\lambda)\,,\quad\lambda\geq0\,.
\end{eqnarray}
Under convexity assumptions on $f$ and $-h$, there exist 
a maximizer $\x_z\in \tilde{K}_z$ and KKT-multiplier $\lambda^*_z\geq0$ such that
\[\nabla f(\x_z)-\lambda^*_z\nabla h(\x_z)\,=\,0\,;\quad \lambda^*_z\,(h(\x_z)-z)\,=\,0\,,\]
and so
\begin{equation}
\label{equiv}
\hat{v}(z)\,=\,\lambda^*_z\,z+\inf_{\x}f(\x)-\lambda^*_z\,h(\x)\,,\quad \forall z\,.
\end{equation}
So the original minimization of $f$ on $\tilde{K}_z$ reduces to 
the minimization of  the Lagrangian $L(\x,\lambda^*_z)$ now over the whole $\R^d$,
for a distinguished value $\lambda^*_z$ of the KKT-multiplier $\lambda\geq0$ associated with the constraint $h(\x)\geq z$.

In particular, if $h$ (resp. $f$) is differentiable and positively homogeneous of degree $d_h$ (resp. $d_f$), then using Euler's identity $\langle\x_z,\nabla f(\x_z)\rangle =d_f\,f(\x_z)$ 
(resp. $\langle\x_z,\nabla h(\x_z)\rangle =d_h\,h(\x_z)$), yields
$\lambda_z\cdot z= \hat{v}(z)\,d_f/d_h$ (to compare with \eqref{link}).

-- The analogue for integration of the Lagrangian $L(\x,\lambda)$ in \eqref{min-lagrangian}
for optimization, is the ``Lagrangian" integrand 
$\x\mapsto f(\x)\exp(-\lambda\,g(\x))$, $\x\in\R^d$, and 

-- the analogue of the dual function $G(\lambda)$
in \eqref{min-lagrangian} is just the integral 
\[\int_{\R^d}f(\x)\exp(-\lambda\,g(\x))\,d\x\,\]
(where the ``$\max_{\x\in\R^d}$" has been  replaced with ``$\int_{\x\in\R^d}$").

-- Finally, the analogue for integration of the 
Legendre-Fenchel transform \eqref{Fenchel}
in optimization, is the Laplace transform 
\[\L_v(\lambda)\,=\,\int_0^\infty \exp(-\lambda\,y)\,v(y)\,dy\,,\quad\forall\lambda\in \C\,,\Re (\lambda)>0\,,\]
of $v$ in \eqref{def-v}, and the  analogue of \eqref{Fenchel-again} is 
the inverse Laplace transform 
\[v(y)\,=\,\frac{1}{2\pi i}\int_{c-i\infty}^{c+i\infty}\exp(\lambda y)\,\L_v(\lambda)\,d\lambda\,,\quad\forall y\in (0,+\infty)\,,\]
(with $c>\Re(a)$ for some $a\in \C$).

Formal analogies between concepts in optimization and their counterparts in integration are not new and have been observed in a number of domains. For instance, \emph{convolution} of Gaussian distributions in probability is the analogue 
of \emph{inf-convolution} of quadratic forms in optimization. Similarly, concepts in probability have their counterparts in Dynamic Programming  as outlined and described in \cite[\S 9.4]{linearity}. 
Finally for convex polytopes $K_y\subset\R^d$, in \cite{lasserre-book} one has shown \emph{explicit} links between \emph{integration} and \emph{counting} on the one hand, and linear (LP) and integer programming (IP) on the other hand. In particular, classical LP ingredients (basis, vertex, reduced gradient, and dual vector) also appear explicitly  in Brion \& Vergne's formula for integration and counting over convex polytopes; for more details see \cite{lasserre-book} and references therein.

The present contribution is also in the same spirit as  in \cite{lasserre-book} and 
\cite{lass-zeron} but now for
integration of a larger class of functions (continuous rather than linear in \cite{lasserre-book}) 
on a larger class of domains ($K_y$ instead of convex polytopes in \cite{lasserre-book}, or very specific 
domains\footnote{For several cases studied in \cite{lass-zeron},
$v(y)$ can be obtained from straightforward inversion of $\mathcal{L}_v$.} in \cite{lass-zeron}). Indeed, 
an integration domain of the form $\{\,\x\in\R^d: g_j(\x)\leq b_j,\:j=1,\ldots,m\,\}$  reduces to
$\{\,\x\in\R^d: \max_j \tilde{g}_j(\x)\leq 1,\:j=1,\ldots,m\,\}$ with
$\tilde{g}_j(\x):=g_j(\x)/b_j$). Then one considers the set 
$K_y:=\{\,\x\in\R^d: \max_j \tilde{g}_j(\x)\leq y\,\}$. (In particular notice
that if the $g_j$'s are all positively homogeneous of same degree $d_g$,
then so is the function $\x\mapsto \max_j g_j(\x)$.) Crucial in all the references 
\cite{lass-zeron,lasserre-book} is to embed a specific integral on $K_1$ 
in a larger parametrized family of integrals on $K_y$, with values $v(y)$, $y\in (0,+\infty)$, and then apply the Laplace transform to $v$.  This is exactly what is done in optimization on $K_y$ where
one  applies the Legendre-Fenchel transform to the \emph{value function} $\hat{v}(y)$ in \eqref{hat-v}.
When the Laplace transform has a closed form expression then its inverse (e.g. for $y=1$ to obtain $v(1)$) can sometimes be 
computed efficiently (see e.g. \cite{lass-zeron});  in optimization it corresponds 
to the case where the minimization of the Lagrangian $L(\x,\lambda)$ with respect to $\x$
in \eqref{min-lagrangian},  provides $G(\lambda)$  explicitly.

\section{Main result}

\subsection{Notation and definitions}

Let $\R[\x]=\R[x_1,\ldots,x_d]_n$  be the ring of polynomials 
in the real variables $x_1,\ldots,x_d$, of degree at most $n$. 
Denote by $\R_+\subset\R$ the positive half-line.
A function $f$ is positively homogeneous of degree $d_f$ if 
$f(\lambda\,\x)=\lambda^{d_f}f(\x)$ for all $\x\in\R^d$ and all $\lambda>0$.
When $f$ is continuously differentiable, Euler's identity states that
$\langle\nabla f(\x),\x\rangle=d_f\,f(\x)$, $\forall \x\in\R^d$.

\subsection*{Laplace transform}
For a function $h:\R_+\to\R$,  its Laplace transform $\L_h:\C\to\C$, is defined by:
\begin{equation}
 \label{laplace-def}
 \L_h(\lambda)\,=\,\int_0^\infty h(y)\,\exp(-\lambda\,y)\,dy\,,\quad \lambda\in\C\,,
\end{equation}
provided that the integral is well-defined. For instance a sufficient condition is 
that  $h$ is of \emph{exponential order} $\exp(at)$ ($a>0$) as $t\to\infty$,
i.e. there exists $T,M>0$ such that for all $t>T$, $\vert h(t)\vert\leq M\exp(at)$; see e.g. \cite[\S 3.3]{Laplace-book}. 
If $h$ is continuous or piecewise continuous in every finite interval $(0,T)$,
then $\L_h$ exists for all $\lambda\in \C$ with $\Re(\lambda)>a$.
When it is the case then we can recover $h$ via the inverse Laplace transform:
\[h(y)\,=\,\frac{1}{2\pi i}\int_{c-i\infty}^{c+i\infty} \exp(\lambda\,y)\,\L_h(\lambda)\,d\lambda\,,\quad y\in (0,+\infty)\,,\]
where $c> a$.
We also have the well-known \emph{Initial Value Theorem} (\cite[Theorem 3.8.1]{Laplace-book})
\begin{equation}
\label{initial-value}
\lim_{\lambda\to \infty}\lambda\,\L_h(\lambda)\,=\,\lim_{y\to 0}h(y)\,,
\end{equation}
and if $\lim_{y\to\infty}  h(y)$ exists, the \emph{Final Value Theorem} 
\begin{equation}
\label{final-value}
\lim_{\lambda\to 0}\lambda\,\L_h(\lambda)\,=\,\lim_{y\to \infty}h(y)\,.
\end{equation}
The latter holds under additional assumptions; see \cite[Theorem 3.8.2, pp. 110--112]{Laplace-book}.

\subsection{Main result}
\label{sec:main}
\begin{assumption}
\label{ass-1}
(i) The function $g:\R^d\to\R$ is continuous, nonnegative and the set
$K_y$ in \eqref{set-Ky} is compact for every $y\in [0,\infty)$.

(ii) The function $f:\R^d\to\R$ is continuous and nonnegative.
\end{assumption}
\vspace{0.1cm}

Let $v:[0,\infty)\to\R$ be as in \eqref{def-v}. 
Observe that if one knows how to evaluate $v(y)$ for continuous nonnegative functions,
then we may also evaluate $v(y)$ for continuous functions bounded from below.
Indeed if $f$ is bounded below,
say $f\geq \tau$ for some $\tau>-\infty$, then
\[v(y)\,=\,\int_{K_y}f\,d\x\,=\,\tau\,\int_{K_y}1\,d\x+\int_{K_y}(f-\tau)\,d\x\]
i.e. $v$ is a weighted sum of two integrals of nonnegative functions ($1$ and $f-\tau)$). Therefore it suffices to restrict to the family of nonnegative functions $f$. 

\begin{thm}
\label{thm-laplace}
Let Assumption \ref{ass-1} hold. 

(i) Assume that $v$ in \eqref{def-v} is of exponential order $\exp(at)$ for every $a>0$\footnote{For example if the growth of $v$ is at most polynomial in $y$.}. Then 
for every real $\lambda>0$,
\begin{equation}
\label{thm-laplace-1}
\L_{v}(\lambda)\,=\,\frac{1}{\lambda}\,\int_{\R^d}f(\x)\,\exp(-\lambda\,g(\x))\,d\x\,,
\end{equation}
(ii) In addition, assume that the set $K_0=\{\,\x\in\R^d: g(\x)=0\,\}$ has Lebesgue measure zero, and that the Final Value Theorem \eqref{final-value} holds (possibly with $+\infty$ limit). Then
for every $y\in (0,+\infty)$, there exists $\lambda_y\in (0,+\infty)$ such that
\begin{equation}
\label{y-lambda}
v(y)\,=\, \lambda_y\,\L_v(\lambda_y)\,=\,\int_{\R^d}f(\x)\,\exp(-\lambda_y\,g(\x))\,d\x\,.
\end{equation}
Similarly,  if $v$ is continuous\footnote{As pointed out by a referee,
$v$ may indeed be discontinuous. If $g$ is a nontrivial polynomial
then $v$ is continuous.} then for every $\lambda\in (0,+\infty)$ there exists $y_\lambda\in (0,+\infty)$ such that
\begin{equation}
\label{lambda-y}
v(y_\lambda)\,=\,\lambda\,\L_v(\lambda)\,=\,\int_{\R^d}f(\x)\,\exp(-\lambda\,g(\x))\,d\x\,.
\end{equation}
\end{thm}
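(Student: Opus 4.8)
For part (i), the plan is to compute the Laplace transform of $v$ directly by Fubini's theorem. Writing $v(y)=\int_{\R^d}f(\x)\,\mathbf{1}_{[g(\x)\le y]}\,d\x = \int_{\R^d}f(\x)\,\mathbf{1}_{[y\ge g(\x)]}\,d\x$, I would substitute into \eqref{laplace-def} to get
\[
\L_v(\lambda)=\int_0^\infty e^{-\lambda y}\Big(\int_{\R^d}f(\x)\,\mathbf{1}_{[y\ge g(\x)]}\,d\x\Big)\,dy .
\]
Since $f\ge 0$ and $\lambda>0$ is real, the integrand is nonnegative, so Tonelli's theorem lets me swap the order of integration without any integrability precondition beyond measurability; the exponential-order hypothesis is what guarantees $\L_v(\lambda)$ is finite for every real $\lambda>0$ in the first place. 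After swapping, the inner integral becomes $\int_{g(\x)}^\infty e^{-\lambda y}\,dy = \frac{1}{\lambda}e^{-\lambda g(\x)}$, and pulling this back out yields exactly \eqref{thm-laplace-1}. The only point requiring a word of care is measurability of $(\x,y)\mapsto f(\x)\mathbf{1}_{[g(\x)\le y]}$, which follows from continuity of $f$ and $g$.

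For part (ii), define $\phi(\lambda):=\lambda\,\L_v(\lambda)=\int_{\R^d}f(\x)e^{-\lambda g(\x)}\,d\x$ for $\lambda\in(0,\infty)$, using part (i). The plan is to show $\phi$ is continuous on $(0,\infty)$ and to identify its two endpoint limits, then invoke the intermediate value theorem. Continuity of $\phi$ on $(0,\infty)$ follows from dominated convergence: on any compact subinterval $[\lambda_0,\lambda_1]\subset(0,\infty)$ the integrand is dominated by $f(\x)e^{-\lambda_0 g(\x)}$, which is integrable since $\phi(\lambda_0)<\infty$. For the limit as $\lambda\to\infty$: the Initial Value Theorem \eqref{initial-value} gives $\lim_{\lambda\to\infty}\phi(\lambda)=\lim_{y\to 0}v(y)=v(0)=\int_{K_0}f\,d\x=0$, using the hypothesis that $K_0$ has Lebesgue measure zero (and continuity of $v$ from the right at $0$, which one gets from monotone convergence since $K_y\downarrow K_0$ as $y\downarrow 0$). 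For the limit as $\lambda\to 0$: the Final Value Theorem \eqref{final-value} gives $\lim_{\lambda\to 0}\phi(\lambda)=\lim_{y\to\infty}v(y)$, which equals $\int_{\R^d}f\,d\x$ by monotone convergence (possibly $+\infty$, which is why the theorem allows an infinite limit). Hence $\phi$ maps $(0,\infty)$ continuously onto an interval whose closure contains $0$ and $\lim_{y\to\infty}v(y)$; since for any fixed $y\in(0,\infty)$ we have $0 < v(y) \le \lim_{t\to\infty}v(t)$ (the left inequality because $K_y$ has positive measure once $y>0$, given $K_0$ is null and $g$ is continuous, so $\{g<y\}$ is a nonempty open set, and using $f\ge 0$; the right inequality by monotonicity of $v$), the intermediate value theorem produces $\lambda_y\in(0,\infty)$ with $\phi(\lambda_y)=v(y)$, which is \eqref{y-lambda}.

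The symmetric statement \eqref{lambda-y} is then immediate: if $v$ is continuous on $[0,\infty)$, then by the same endpoint computations $v$ maps $(0,\infty)$ onto an interval whose closure contains $v(0)=0$ and $\lim_{y\to\infty}v(y)$, and for a fixed $\lambda\in(0,\infty)$ the value $\phi(\lambda)=\int_{\R^d}f\,e^{-\lambda g}\,d\x$ satisfies $0<\phi(\lambda)\le\int_{\R^d}f\,d\x=\lim_{y\to\infty}v(y)$, so the intermediate value theorem applied to $v$ gives $y_\lambda$ with $v(y_\lambda)=\phi(\lambda)$. The main obstacle, and the place where the hypotheses really do work, is the pair of endpoint limits: the limit at $\infty$ genuinely needs $K_0$ to be Lebesgue-null (otherwise $\phi(\lambda)\to\int_{K_0}f\,d\x>0$ and small values of $v(y)$ would be unreachable), and the limit at $0$ is exactly where the extra regularity assumptions behind the Final Value Theorem \eqref{final-value} are consumed — one cannot simply interchange $\lim_{\lambda\to 0}$ with the integral defining $\L_v$ without them. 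Everything else is routine Tonelli/dominated-convergence bookkeeping together with one application of the intermediate value theorem.
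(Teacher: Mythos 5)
Your proposal is correct and follows essentially the same route as the paper: a Tonelli interchange for \eqref{thm-laplace-1}, then continuity of $\lambda\mapsto\lambda\,\L_v(\lambda)$ together with the Initial and Final Value Theorems to pin down the endpoint limits $0$ and $v(\infty)$, and the intermediate value theorem to produce $\lambda_y$ (resp.\ $y_\lambda$). You merely supply a bit more bookkeeping than the paper does (dominated convergence for the continuity of $\phi$, monotone convergence for $v(0)=0$ and $v(\infty)=\int f\,d\x$), which is a welcome addition rather than a divergence.
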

\begin{proof}
(i) By \cite[Theorem 3.31]{Laplace-book}, $\L_v$ exists for all $\lambda$ provided that $\Re(\lambda)>0$. Next, let $\lambda\in\R$. Then
 \begin{eqnarray*}
 \L_v(\lambda)&=&\int_0^\infty v(y)\,\exp(-\lambda\,y)\,dy\\
 &=&\int_0^\infty \left(\int_{K_y}f(\x)\,d\x\right)\,\exp(-\lambda\,y)\,dy\\
 &=&\int_{\R^d} f(\x) \left(\int_{g(\x)}^\infty \exp(-\lambda\,y)\,dy\,\right)\,d\x\\
 &=&\frac{1}{\lambda}\int_{\R^d}f(\x)\,\exp(-\lambda\,g(\x))\,d\x\,,\quad\forall \lambda\,>\,0\,,
\end{eqnarray*}
which yields \eqref{thm-laplace-1}. (The third equality is obtained by 
a standard Fubini-Tonelli interchange.)

(ii) In view of the assumptions on $f$ and $g$, the function $v$ is non decreasing, and the Initial Value Theorem \eqref{initial-value} holds. Hence  
$\lim_{\lambda\to\infty}\lambda\,\L_v(\lambda)=v(0)=0$. Next, as the Final value Theorem holds 
we also have $\lim_{\lambda\to 0}\lambda\,L_v(\lambda)=\lim_{y\to\infty}v(y)=:v(\infty)$ with
possibly $v(\infty)=+\infty$. Moreover, the function
\[\lambda \mapsto \lambda\,\L_v(\lambda)\,=\,\int_{\R^d}f(\x)\,\exp(-\lambda\,g(\x))\,d\x\,,\quad\lambda\in (0,+\infty)\,,\]
is continuous and therefore,
\[\lambda\,\L_v(\lambda)(0,+\infty)\,=\,(0,v(\infty))\,\supseteq\,v((0,\infty))\,,\]
which yields the desired result \eqref{y-lambda}, and also yields \eqref{lambda-y} if $v$ continuous.
\end{proof}
Hence Theorem \ref{thm-laplace} states that for every $y\in (0,+\infty)$, integrating $f$ on $K_y$ w.r.t. Lebesgue measure is the same as integrating $f $ on the whole space $\R^d$ but now against the measure 
with density $\exp(-\lambda_y\,g(\x))$ w.r.t. Lebesgue measure, for some distinguished real scalar $\lambda_y\in (0,+\infty)$. 

\begin{rem}
\label{rem-1}
For a fixed $\lambda$, evaluating the integral
$\int f\exp(-\lambda\,g)d\x$ is challenging but numerical approximations are sometimes available, e.g. 
via cubature formula for the weight function $\x\mapsto \exp(-\lambda_y\,g(\x))$
for some functions $g$; see e.g. \cite{Cools-1,Cools-2}. A prototypal and important case is when 
$g$ is a (nonnegative) quadratic polynomial. Then the measure $\exp(-\lambda_y\,g(\x))\,d\x$
is (up to scaling) a Gaussian measure for which numerical integration techniques are is well documented; see e.g. \cite{cubature}; however, $v(y)$ can be also computed directly by integrating $f$
via efficient cubatures on the ellipsoid $K_y$. On the other hand,  when $g$ is homogeneous nonnegative 
(and possibly not convex) then $K_y$ can have a nasty geometry. But the integral on $\R^d$ 
in \eqref{y-lambda} may still
be well approximated, e.g. by integration of the \emph{continuous} function $f\exp(-\lambda g)$ on a
box $\mathbf{B}_r=[-r,r]^d$ (with $r$ sufficiently large) via cubatures w.r.t. Lebesgue measure on $\mathbf{B}_r$ (e.g. Gauss-Legendre cubatures).
In contrast computing $v(y)$ in \eqref{def-v} as
$\int_{K_y}fd\x=\int_{[-r_0,r_0]^d}f\,1_{K_y}d\x$ (now on the box $\mathbf{B}_{r_0}\supset K_y$)  via similar cubatures 
may not be precise because the integrand $f(\x)\,1_{K_y}(\x)$ is \emph{not} continuous
and may be nasty. 
\end{rem}
\begin{ex}
\label{example-1}
Examples of nasty compact sets $K_y$ for homogeneous polynomials $g$ are displayed in 
Figure \ref{figure-1}.
\begin{figure}[ht]
\resizebox{0.7\textwidth}{!}
{\includegraphics{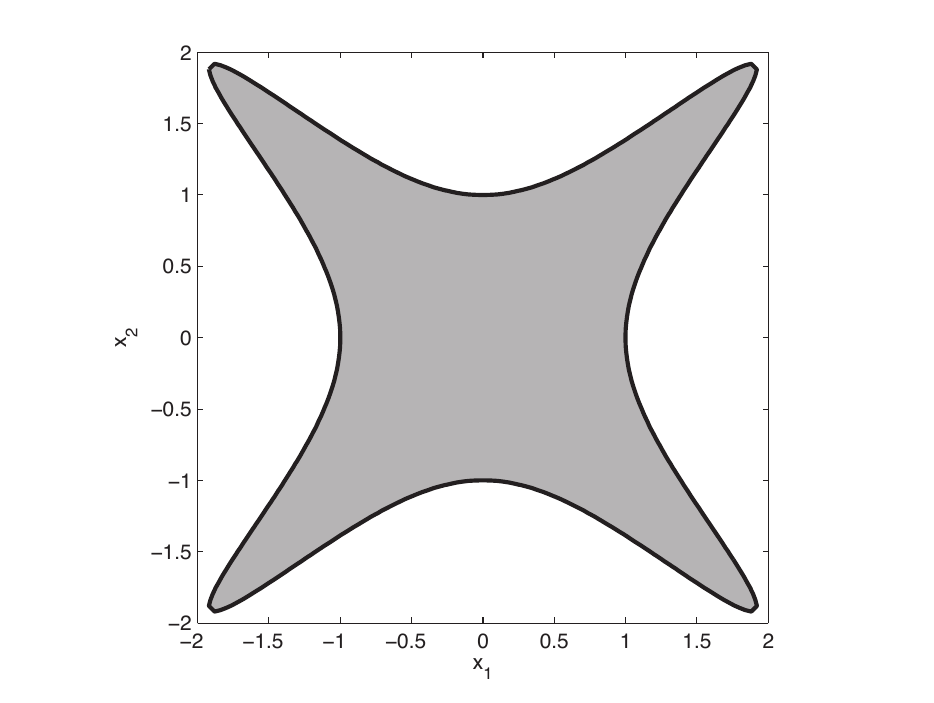}\includegraphics{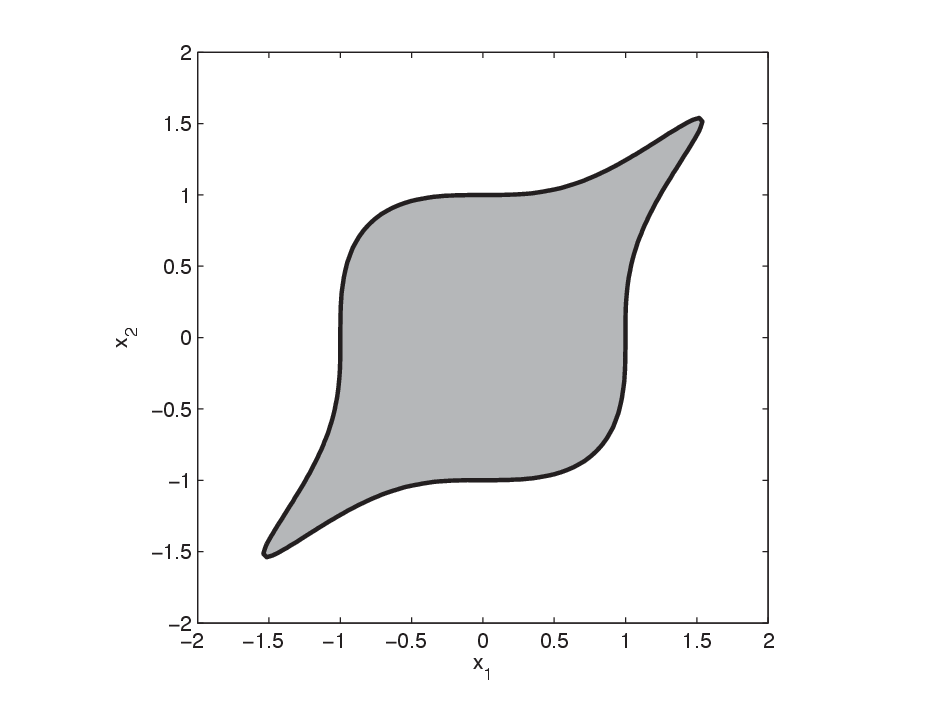}}
\caption{$K_1$ with $g(\x)=x^4+y^4-1.925\,x^2y^2$ (left) and with $g(\x)=x^6+y^6-1.925\,x^3y^3$ (right)\label{figure-1}}
\end{figure}
\end{ex}
Equation \eqref{y-lambda} is also the analogue for integration of
the Lagrangian relaxation \eqref{equiv} in optimization, where under some convexity assumptions,
the minimization of $f$ on $\tilde{K}_z=\{\x: h(\x)\geq z\}$ is replaced with the minimization of 
the Lagrangian $\x\mapsto L(\x,\lambda^*_z)=f-\lambda^*_z\,h$ on the whole $\R^d$ for some distinguished multiplier $\lambda^*_z\geq0$.
Indeed with $z$ fixed, the KKT-optimality conditions
at a local minimizer $\x^*_z\in \tilde{K}_z$ of $\P$ state that there exists $\lambda^*_z\geq0$
such that  $\x^*_z$ is a critical point of the Lagrangian  $L(\x,\lambda^*_z)$;  in addition, 
if $f$ and $-h$ are convex then $\x^*_z$ is a global minimizer of $L(\x,\lambda^*_z)$.

Next, with $\lambda$ fixed and $\L_v$ the Laplace transform of $v$,
\begin{equation}
\label{equiv-2}
\lambda\,\L_v(\lambda)
\,=\,\int_{\R^d}f(\x)\,\exp(-\lambda\,g(\x))\,d\x\,,\end{equation}
is the counterpart for integration of the Legendre-Fenchel transform of $\hat{v}$ in \eqref{hat-v}:
\begin{eqnarray}
\nonumber
\hat{v}^*(\lambda)&=&\sup_{z}\lambda\,z-\hat{v}(z)\,=\,-\inf_{\x\in\R^d} f(\x)-\lambda\,h(\x)\\
\label{equiv-3}
&=&\sup_{\x\in\R^d} \:\lambda\,h(\x)-f(\x)\,
\quad\lambda\geq0\,.\end{eqnarray}

In \eqref{equiv-2} the infinitesimal sum ``$\int_{\R^d}$"  is the analogue of
 ``$\sup_{\x\in\R^d}$" in \eqref{equiv-3}.  So it is fair to consider 
the integrand $\x\mapsto f(\x)\,\exp(-\lambda\,g(\x))$  in \eqref{equiv-2}
as the counterpart of the Lagrangian  $L(\x,\lambda)$
in optimization \eqref{lagrangian-optim}. Moreover, we call the distinguished scalar $\lambda_y$ in Theorem \ref{thm-laplace} a \emph{``Laplace dual variable"},
the exact analogue for integration of the Karush-Kuhn-Tucker Lagrange multiplier $\lambda^*_z$ in \eqref{equiv} for optimization, associated with the constraint $h(\x)\geq z$.

These correspondences
provide with an additional instance of formal analogies between duality in optimization and integration
via Legendre-Fenchel and Laplace transforms respectively, in the spirit of 
those investigated in \cite{lasserre-book} for LP and Integer Programming on the one hand and
linear integration and counting on the other hand.

However, so far Theorem \ref{thm-laplace} is only a qualitative result as it does not provide 
a clue on what is the scalar $\lambda_y$ associated with $y\in (0,+\infty)$. We next 
partially address this issue under additional assumptions on $f$ and $g$.
\begin{cor}
\label{cor-homog}
Let Assumption \ref{ass-1}(i) holds. In addition
let $g$ (resp. $f$) be positively homogeneous of degree $d_g$ (resp. $d_f$). 
Then with $v$ as in \eqref{def-v}:
\begin{equation}
\label{lem-laplace-1}
\L_{v}(\lambda)\,=\,\frac{1}{\lambda^{1+(d+d_f)/d_g}}\,\int_{\R^d}f(\x)\,\exp(-g(\x))\,d\x\,,
\end{equation}
for all $\lambda\in\,(0,+\infty)$, and
\begin{equation}
\label{lem-laplace-2}
v(y)\,=\,\frac{y^{(d+d_f)/d_g}\int_{\R^d}f(\x)\,\exp(-g(\x))\,d\x}{\Gamma(1+(d+d_f)/d_g)}\,
\end{equation}
for all $y\in [0,+\infty)$.
In addition, for every $y\in (0,+\infty)$, 
\begin{equation}
 \label{lem-laplace-3}\,
 v(y)\,=\,\lambda_y\, \L_v(\lambda_y)\,=\,\int_{\R^d}f(\x)\,\exp(-\lambda_y\,g(\x))\,d\x\,,
  \end{equation}
with $y\,\lambda_y=\Gamma(1+(d+d_f)/d_g)^{d_g/(d+d_f)}$.
 \end{cor}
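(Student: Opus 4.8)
The plan is to derive \eqref{lem-laplace-1} directly from Theorem \ref{thm-laplace}(i) by exploiting homogeneity to rescale the integral $\int_{\R^d} f(\x)\exp(-\lambda g(\x))\,d\x$, then read off \eqref{lem-laplace-2} by inverting the Laplace transform of a power function, and finally obtain \eqref{lem-laplace-3} by matching the explicit formulas for $v(y)$ and for $\lambda\,\L_v(\lambda)$.

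First I would start from Theorem \ref{thm-laplace}(i), which gives $\L_v(\lambda)=\lambda^{-1}\int_{\R^d}f(\x)\exp(-\lambda g(\x))\,d\x$ for $\lambda>0$; note that Assumption \ref{ass-1}(i) together with homogeneity of $g$ forces $d_g>0$ and makes $v$ grow polynomially (indeed $K_y=y^{1/d_g}K_1$, so $v$ has exactly polynomial growth), so the hypothesis of Theorem \ref{thm-laplace}(i) is automatically met. In the integral I substitute $\x=\lambda^{-1/d_g}\bz$; then $g(\x)=\lambda^{-1}g(\bz)$ by positive homogeneity of degree $d_g$, $f(\x)=\lambda^{-d_f/d_g}f(\bz)$ by positive homogeneity of degree $d_f$, and $d\x=\lambda^{-d/d_g}d\bz$. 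Collecting the powers of $\lambda$ gives exactly the factor $\lambda^{-(d+d_f)/d_g}$ in front of $\int_{\R^d}f(\bz)\exp(-g(\bz))\,d\bz$, and dividing by the leading $\lambda^{-1}$ from Theorem \ref{thm-laplace}(i) yields \eqref{lem-laplace-1}. (One should also remark that $\int_{\R^d}f\exp(-g)\,d\x$ is finite: compactness of the sublevel sets of $g$ plus continuity give the needed decay, so the manipulation is legitimate.)

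Next, to get \eqref{lem-laplace-2} I recall the elementary Laplace pair $\L_{y^\alpha}(\lambda)=\Gamma(\alpha+1)/\lambda^{\alpha+1}$, valid for $\alpha>-1$, with $\alpha:=(d+d_f)/d_g>0$. Comparing this with \eqref{lem-laplace-1} shows that $v$ and $y\mapsto \Gamma(1+\alpha)^{-1}y^{\alpha}\int f\exp(-g)\,d\x$ have the same Laplace transform on $(0,\infty)$; since both are continuous functions of exponential order, injectivity of the Laplace transform (Lerch's theorem) gives \eqref{lem-laplace-2} for all $y>0$, and it extends to $y=0$ since both sides vanish there. Finally, for \eqref{lem-laplace-3}, Theorem \ref{thm-laplace}(ii) applies (the set $K_0$ has measure zero because $K_0=y^{-1/d_g}K_0$ is bounded and, being a proper sublevel set, cannot have positive measure — or one simply invokes the hypothesis directly) and tells us there is $\lambda_y>0$ with $v(y)=\lambda_y\L_v(\lambda_y)$. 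Substituting \eqref{lem-laplace-1} on the right and \eqref{lem-laplace-2} on the left, the common factor $\int f\exp(-g)\,d\x$ cancels and the identity $\lambda_y\cdot\lambda_y^{-(d+d_f)/d_g}= y^{(d+d_f)/d_g}/\Gamma(1+(d+d_f)/d_g)$ must hold; raising both sides to the power $d_g/(d+d_f)$ and simplifying the exponent $1-(d+d_f)/d_g$ of $\lambda_y$ — which becomes $-(d+d_f)/d_g$ after a short computation — yields $y\,\lambda_y=\Gamma(1+(d+d_f)/d_g)^{d_g/(d+d_f)}$.

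I do not expect a serious obstacle here; the only points requiring care are the justification that the relevant integrals and Laplace transforms converge (handled by the compact-sublevel-set hypothesis in Assumption \ref{ass-1}(i)) and the bookkeeping of exponents of $\lambda$ in the rescaling step, which is the one place an arithmetic slip could creep in. The measure-zero claim for $K_0$ and the applicability of the Final Value Theorem are either immediate from homogeneity or can simply be added as standing hypotheses inherited from Theorem \ref{thm-laplace}.
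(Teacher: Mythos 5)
Your proof is correct and follows essentially the same route as the paper: the substitution $\x=\lambda^{-1/d_g}\bz$ in the formula of Theorem~\ref{thm-laplace}(i) to get \eqref{lem-laplace-1}, identification of $v$ through the Laplace pair $y^{\alpha}\leftrightarrow\Gamma(1+\alpha)/\lambda^{1+\alpha}$ with $\alpha=(d+d_f)/d_g$, and then solving the explicit equation $v(y)=\lambda_y\L_v(\lambda_y)$ for $\lambda_y$. The only (cosmetic) difference is that for \eqref{lem-laplace-2} the paper first uses $K_y=y^{1/d_g}K_1$ to get $v(y)=y^{\alpha}v(1)$ and then identifies the constant $v(1)$ by comparing the two expressions for $\L_v$, whereas you invoke injectivity of the Laplace transform (Lerch), which is slightly heavier but equally valid here; also note the exponent in your final display should read $\lambda_y\cdot\lambda_y^{-(1+(d+d_f)/d_g)}=\lambda_y^{-(d+d_f)/d_g}$, a slip that does not affect your (correct) conclusion $y\,\lambda_y=\Gamma(1+(d+d_f)/d_g)^{d_g/(d+d_f)}$.
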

 \begin{proof}
 By \eqref{thm-laplace-1} in Theorem \ref{thm-laplace}(i), for every real scalar $\lambda>0$
 (and using that $f,g$ are positively homogeneous),
 \begin{eqnarray*}
 \L_v(\lambda)&=&\int_{\R^d}f(\x)\,\exp(-\lambda\,g(\x))\,d\x\,,\quad\Re(\lambda)>0\\
 &=&\frac{1}{\lambda^{1+(d+d_f)/d_g}}\int_{\R^d}f(\x)\,\exp(-g(\x))\,d\x\,,
  \end{eqnarray*}
 which yields \eqref{lem-laplace-1}. On the other hand, as $f$ and $g$ are positively homogeneous,
 $v$ is also positively homogeneous of degree $(d+d_f)/d_g)$, and $v$ being univariate, 
  \begin{eqnarray*}
  v(y)&=&y^{(d+d_f)/d_g}\,v(1)\quad  \Rightarrow\\
  \L_v(\lambda)&=&\frac{\Gamma(1+(d+d_f)/d_g)}{\lambda^{1+(d+d_f)/d_g)}}\,v(1)\,,
  \quad\Re(\lambda)>0\,,
  \end{eqnarray*}
 from which we deduce that $v(1)\Gamma(1+(d+d_f)/d_g)=\int_{\R^d}f(\x)\exp(-g(\x))\,d\x$,
 and from  which \eqref{lem-laplace-2} follows.
 So with $y\in (0,+\infty)$, 
 \begin{eqnarray*}
 \lambda_y\,\L_v(\lambda_y)&=&v(y)  \quad\Leftrightarrow\\
 \lambda_y\,y&=&
 \Gamma(1+(d+d_f)/d_g)^{d_g/(d+d_f)}\,,\end{eqnarray*}
 which yields \eqref{lem-laplace-3}.
\end{proof}
 \vspace{.2cm}
 So Corollary \ref{cor-homog} identifies the Laplace (or ``dual") variable $\lambda_y$ 
 associated with each $y\in (0,+\infty)$, and such that 
 integrating $f$ on $K_y$ reduced to integrating $f$ 
 on $\R^d$, but now against the measure with density $\exp(-\lambda_y\,g(\x))$ 
 w.r.t. Lebesgue measure. 
 
 Of course, in view of \eqref{lem-laplace-2}, to evaluate $v(y)$ is suffices to evaluate 
 $v(1)$, or equivalently, to compute the single integral 
 $\int_{\R^d}f(\x)\,\exp(-g(\x))\,d\x$, which is quite a difficult task in general. 
 However, to approximate the integral,  one may invoke numerical tools 
 like cubatures for the weight function $\exp(-g(\x))$  as in e.g. \cite{Cools-1,Cools-2}; 
 when $K_y$ has a nasty geometry then
 integrating the continuous function $f(\x)\exp(-g(\x))$ on $\R^d$ via cubatures 
 w.r.t. Lebesgue measure on the box $\mathbf{B}_r:=[-r,r]^d$ for large $r$, might be preferable to
 integrating directly $f$ on $K_1$, e.g., via cubatures w.r.t. Lebesgue measure on a box $\mathbf{B}_{r_0}\supset K_1$ because the function $f(\x)\,1_{K_1}(\x)$ is \emph{not} continuous; see Remark \ref{rem-1}.  An even more specific case is when $g$ is a nonnegative quadratic form in which case one has to integrate $f$ against a Gaussian measure for which 
 several specialized procedures exist \cite{cubature}.
 Notice also that in the particular case where $d+d_f=d_g$ then $\lambda_y\,y=1$ and
 so $v(y)=\int_{\R^d}f(\x)\,\exp(-g(\x)/y)\,d\x$ for all $y\,\in (0,+\infty)$.
 
\subsection*{The case of polynomials}
  When $f$ is a polynomial, then write $f$ as  $f(\x)=\sum_{k=0}^{d_f}f_k(\x)$,
 where for each $k$, $f_k$ is a homogeneous polynomial of degree $k$,
 and let
 \[v_k(y)\,:=\,\int_{K_y}f_k(\x)\,d\x\,,\quad k=0,\ldots,d_f\,.\]
In view of Corollary \ref{cor-homog},
 for every $y\in (0,+\infty)$:
  \begin{eqnarray}
  \label{eq:poly}
  v(y)&=&\sum_{k=0}^{d_f}\int_{\R^d}f_k(\x)\,\exp(-\lambda_{y,k}\,g(\x))\,d\x\\
  \label{eq:poly-2}
  &=&\sum_{k=0}^{d_f}\lambda_{y,k}\,\mathcal{L}_{v_k}(\lambda_{y,k})\,,
 \end{eqnarray}
with $\lambda_{y,k}=\Gamma(1+(d+k)/d_g)^{d_g/(d+k)}/y$.
Hence in this case, the explicit description \eqref{eq:poly}
of $v$ in terms of integrals on the whole $\R^d$, follows from homogeneity with no need of $f$ to be nonnegative.

\subsection*{The case of the simplex}
Let $\e:=(1,\ldots,1)\in\R^d$ and $K_y:=\{\,\x\geq0: \e^T\x\leq y\}$ (a dilation of the canonical simplex).
As one integrates over  
a subset of $\R^d_+$ one may and will assume that $f(\x)=0$ on 
$\R^d\setminus\R^d_+$. Then: 
\begin{eqnarray*}
\L_v(\lambda)&=&\int_0^\infty v(y)\exp(-\lambda\,y)\,dy\\
&=&\frac{1}{\lambda}\int_{\R^d_+}f(\x)\,\exp(-\langle\lambda\e,\x\rangle)\,d\x\,=\,
\frac{1}{\lambda}\,\hat{\L}_f(\lambda\,\e)\,,
\end{eqnarray*}
for all real $\lambda$ such that the integral is finite, and where  $\hat{\mathcal{L}}_f:\C^d\to\C$ is the multivariate Laplace transform of $f$, that is,
\[\hat{\mathcal{L}}_f(\bgamma)\,:=\,\int_{\{\x:\x\geq0\}} f(\x)\,\exp(-\langle\bgamma,\x\rangle)\,d\x\,,\]
where the integral is defined for $\Re(\bgamma)>\a$ for some $\a\in\R^d$.  So
the univariate Laplace transform $\mathcal{L}_v$ of $v$ evaluated at $\lambda\in\C$, is the
 multivariate Laplace transform $\hat{\mathcal{L}}_f$ of $f$ evaluated at $\lambda\e\in\C^d$. 
Then Theorem \ref{thm-laplace} states that under Assumption \ref{ass-1}, for every $y\in (0,+\infty)$, there exists $\lambda_y\in (0,+\infty)$  such that
$v(y)=\lambda_y\,\hat{\L}_f(\lambda_y\,\e)$, i.e., $v$ is directly related to the Laplace transform 
$\hat{\L}_f$ of $f$, evaluated on the  diagonal $\lambda\e$. 
\begin{ex}
\label{ex-2}
For instance let $f(\x):=x_1^{\alpha_1}\cdots x_d^{\alpha_d}$ with $-1<\alpha_i\in\R$ for all $i$. Then
 \begin{eqnarray*}
 \hat{\mathcal{L}}_f(\bgamma)&=&\int_{\{\x:\x\geq0\}}\prod_{i=1}^dx_i^{\alpha_i}\,\exp(-\langle\bgamma,\x\rangle)\,d\x\\
 &=&\prod_{i=1}^d \int_{\{x_i:x_i\geq0\}}x_i^{\alpha_i}\,\exp(-\gamma_i\,x_i)\,d\x\\
 &=&\prod_{i=1}^d
 \frac{\Gamma(1+\alpha_i)}{\gamma_i^{1+\alpha_i}}\,,
 \end{eqnarray*}
 and therefore, for every $\lambda\in (0,+\infty)$:
 \[\mathcal{L}_v(\lambda)\,=\,\frac{1}{\lambda}\hat{\mathcal{L}}_f(\lambda\,\e)\,=\,
 \frac{\prod_{i=1}^d\Gamma(1+\alpha_i)}{\lambda^{1+d+\sum_{i=1}^d\alpha_i}}\,,\]
 which yields
 \[v(y)\,=\,y^{(d+\sum_{i=1}^d\alpha_i)}\,\frac{\prod_{i=1}^d\Gamma(1+\alpha_i)}{\Gamma(1+d+\sum_{i=1}^d\alpha_i)}\,,\]
 for every $y\in [0,+\infty)$.
 Again one may easily check that for every $y\in (0,\infty)$, there exists $\lambda_y\in (0,\infty)$ such that
 $v(y)=\lambda_y\,\mathcal{L}_v(\lambda_y)$.
Therefore $\lambda_y$ in Theorem 2.2 satisfies
\[v(y)\,=\,\lambda_y\,\mathcal{L}_v(\lambda_y)\,=\,\hat{\mathcal{L}}_f(\lambda_y\,\e)\,,\quad\forall y\in (0,+\infty)\,.\]
\end{ex}
\vspace{.2cm}

By additivity, the treatment of generalized polynomials
\[f(\x)\,:=\,\sum_{k=1}^sf_k\,\left(\prod_{i=1}^dx_i^{\alpha_{ki}}\right)\,,\:\mbox{(with $\alpha_{ki}>-1$ for all $k,i$)}\]
is then straightforward.  

The case of the simplex is interesting because for functions $f$  whose Laplace transform $\hat{\mathcal{L}}_f$ is 
known, one obtains directly
$\mathcal{L}_v(\lambda)$ as $\hat{\mathcal{L}}_f(\lambda\,\e)/\lambda$, i.e., with no computation involved.
Several examples of multivariate functions $f$ on $\R^d_+$
whose Laplace transform $\hat{\mathcal{L}}_f$ is readily available, are provided in \cite[\S 4]{lass-zeron}. For such functions $f$, the resulting function $\mathcal{L}_v(\lambda)$ is  explicit. 

\subsection*{Mean Value Theorem}

For the optimization problem $\P$ in \eqref{hat-v}, in addition to the optimal value $\hat{v}(z)$,
one is also interested in extracting a minimizer $\hat{\x}_z\in \tilde{K}_z$.
The counterpart for integration of ``extraction" of minimizer in optimization,
is provided by the Mean Value Theorem. 
 Indeed if $f$ is continuous and $K_y$ is compact and connected then by the Mean Value Theorem (MVT),
for every $y\in (0,+\infty)$, there exists $\x^*_y\in K_y$ such that
\begin{equation}
\label{x-y}
v(y)=\int_{K_y}f(\x)\,d\x\,=\,f(\x^*_y)\,\mathrm{vol}(K_y)
\end{equation}
to compare with $\hat{v}(z)=f(\hat{\x}_z)$ for some $\hat{\x}_z\in \tilde{K}_z$
for the optimization problem \eqref{hat-v}. Clearly, as for the extraction of 
the minimizer $\hat{\x}_z$ in optimization, MVT ``extracts" a distinguished point $\x^*_y\in K_y$ that ``explains" $v(y)$. 

In fact this statement can be made more formal after introducing 
the \emph{Maslov integral} in idempotent analysis
denoted $\int^{\oplus}$ in \cite[\S 4, p. 1485 ]{Litvinov},
that is, working in the max-plus algebra $(\oplus,\odot)$ where $a\oplus b=\max[a,b]$ and $a\odot b=a+b$. Then 
with the semi-ring $S=(\R,\oplus,\odot)$ and $B(X,S)$ the space of functions $X\to S$ that are bounded,
the idempotent analog of integration is 
\[\int^{\oplus}_{X} \phi(\x)\,d\x\,=\,\sup_{\x\in X}\phi(\x)\,,\]
for all bounded functions $\phi\in B(X,S)$; see \cite[\S 4]{Litvinov}.  So the ``$\sup$" is viewed as an ``integration" in this algebra (and 
therefore with $\mathrm{``vol"}(X)=\int^{\oplus}_X d\x=1$). Hence if $\x^*=\arg\max_{\x\in X}\phi(\x)$ then one has the MVT 
\[\phi(\x^*)\,=\,\sup_{\x\in X}\phi(\x)\,=\,\int_X^{\oplus}\phi(\x)\,d\x\,=\,\phi(\x^*)\,\int^{\oplus}_Xd\x\,\]
as for classical integration. For more details on idempotent analysis the interested reader is referred to
 e.g. \cite{Litvinov,maslov} and the many references therein.

\section{Conclusion}
We have provided  a Laplace duality framework 
for integrals on domains $K_y\subset\R^d$ parametrized by $y\in (0,+\infty)$. It mimics Legendre-Fenchel duality in optimization, and under some conditions
we have exhibited existence of a distinguished Laplace dual 
variable $\lambda_y$ that permits to replace the initial integral on the compact domain $K_y$ with an associated integral on the whole $\R^d$, with same value. In the  homogeneous case one obtains an explicit expression of $\lambda_y$  and it  would be interesting to identify other cases where such an identification is possible because then, the original integral 
can be approximated, e.g.,  by cubatures for the integral on the whole $\R^d$
with the identified specific exponential weight.


 \end{document}